\newcommand{\eps}{\varepsilon}
\newcommand{\set}[1]{\left\{#1\right\}}
\newcommand{\p}{\partial}
\newcommand{\mc}{\mathbf{c}}
\newcommand{\me}{\mathbf{e}}
\newcommand{\mf}{\mathbf{f}}
\newcommand{\mr}{\mathbf{r}}
\newcommand{\mx}{\mathbf{x}}
\newcommand{\my}{\mathbf{y}}
\newcommand{\mz}{\mathbf{z}}
\newcommand{\mB}{\mathbf{B}}
\newcommand{\mE}{\mathbf{E}}
\newcommand{\mG}{\mathbf{G}}
\newcommand{\mH}{\mathbf{H}}
\newcommand{\mU}{\mathbf{U}}
\newcommand{\mW}{\mathbf{W}}
\newcommand{\vt}{\boldsymbol{\theta}}
\newcommand{\vv}{\boldsymbol{\vartheta}}
\newcommand{\vx}{\boldsymbol{\xi}}
\newtheorem{thm}{Theorem}[section]
\newtheorem{lem}[thm]{Lemma}
\newtheorem{Rem}[thm]{Remark}
\begin{document}

\title{Interpretation of MUSIC for location detecting of small inhomogeneities surrounded by random scatterers}
\author{Won-Kwang Park}
\affil{Department of Mathematics, Kookmin University, Seoul, 02707, Republic of Korea.\\
e-mail: parkwk@kookmin.ac.kr}
\date{}
\maketitle

\begin{abstract}
  In this paper, we consider the MUltiple SIgnal Classification (MUSIC) algorithm for identifying the locations of small electromagnetic inhomogeneities surrounded by random scatterers. For this purpose, we rigorously analyze the structure of MUSIC-type imaging function by establishing a relationship with zero-order Bessel function of the first kind. This relationship shows certain properties of the MUSIC algorithm, explains some unexplained phenomena, and provides a method for improvements.
\end{abstract}

\section{Introduction}
One of the purposes of the inverse scattering problem is to identify the characteristics (location, shape, material properties, etc.) of small inhomogeneities from the scattered field or far-field pattern. This problem, which arises in fields such as physics, engineering, and biomedical science, is highly relevant to human life; thus, it remains an important research area. Related works can be found in \cite{A3,ABF,FKM,KDAK,SKLKLJC} and references therein.

Attempts to address the problem described above have led to the development of the MUltiple SIgnal Classification (MUSIC)-type algorithm to find unknown inhomogeneities and the algorithm has been applied to various problems, e.g., detection of small inhomogeneities in homogeneous space \cite{AIL2,AILP,IGLP,ZC}, location identification of small inhomogeneities embedded in a half-space or multi-layered medium \cite{AIL1,G2,SCC}, reconstructing perfectly conducting cracks \cite{AGKPS,AKLP}, imaging of internal corrosion \cite{AKKLL}, shape recognition of crack-like thin inhomogeneities \cite{AJP,P-MUSIC1,PL3} and volumetric extended targets \cite{AGKLS,HSZ1,HSZ2}, and application to the biomedical imaging \cite{S2}. We also refer to \cite{AK2,C} for a detailed and concise description of MUSIC. Several research efforts have contributed to confirming that MUSIC is a fast and stable algorithm that can easily be extended to multiple inhomogeneities, and that does not require specific regularization terms that are highly dependent on the problem at hand. However, its feasibility is only confirmed when the background medium is homogeneous, i.e., the imaging performance of MUSIC when unknown inhomogeneities are surrounded by random scatterers remains unknown. In several works \cite{CSDPS,BPTB,K1,S3}, an inverse scattering problem in random media has been concerned. Specially, mathematical theory of MUSIC for detecting point-like scatterers embedded in an inhomogeneous medium has been concerned in \cite{C1}. Motivated these remarkable works, a more careful investigation of the mathematical theory still required.

Motivated by the above, MUSIC algorithm has been applied for detecting the locations of small electromagnetic inhomogeneities when they are surrounded by electromagnetic random scatterers and confirmed that it can be applied satisfactorily. However, this only relied on the results of numerical simulations, i.e., a heuristic approach to some extent, which is the motivation for the current work. In this contribution, we carefully analyze the mathematical structure of a MUSIC-type imaging function and discover some properties. This work is based on the relationship between the singular vectors associated with nonzero singular values of a multi-static response (MSR) matrix and asymptotic expansion formula due to the existence of small inhomogeneities, refer to \cite{AK2}.

This paper is organized as follows. Section \ref{sec:2} introduces the two-dimensional direct scattering problem and an asymptotic expansion formula in the presence of small inhomogeneities. In Section \ref{sec:3}, a MUSIC-type imaging function is introduced. In Section \ref{sec:4}, we analyze the mathematical structure of the MUSIC-type imaging function and discuss its properties. In Section \ref{sec:5}, we present the results of numerical simulations to support the analyzed structure of MUSIC and Section \ref{sec:6} presents a short conclusion.

\section{Two-dimensional direct scattering problem}\label{sec:2}
In this section, we survey a two-dimensional direct scattering problem and introduce an asymptotic expansion formula. For a more detailed description we recommend \cite{PL3,AK2,RC}. Let $\Sigma_m$, $m=1,2,\cdots,M$, be an electromagnetic inhomogeneity with a small diameter $r_m$ in two-dimensional space $\mathbb{R}^2$. Throughout this paper, we assume that every $\Sigma_m$ is expressed as
\[\Sigma_m=\mz_m+r_m\mB_m,\]
where $\mz_m$ denotes the location of $\Sigma_m$ and $\mB_m$ is a simple connected smooth domain containing the origin. For the sake of simplicity, we let $\Sigma$ be the collection of $\Sigma_m$. Throughout this paper, we assume that inhomogeneities are well separated from each other such that
\begin{equation}\label{Separated}
  \omega|\mz_m-\mz_{m'}|\gg1-\frac{1}{4}=0.75,
\end{equation}
for all $m,m'=1,2,\cdots,M$ and $m\ne m'$.

Let us denote $\Delta_s$, $s=1,2,\cdots,S$, as the random scatterer with small radius $r_s<r$ and let $\Delta$ be the collection of $\Delta_s$. Similarly, we assume that $\Delta_s$ is of the form:
\[\Delta_s=\my_s+r_s\mB_m.\]
As before, suppose that $\Delta_s\cap\Delta_{s'}=\emptyset$ for all $s,s'=1,2,\cdots,S$ and $s\ne s'$ and the positions of $\my_s$ are random but they are fixed for all frequencies discussed later.

In this work, we assume that every inhomogeneity is characterized by its dielectric permittivity and magnetic permeability at a given positive angular frequency $\omega=2\pi/\lambda$, where $\lambda$ denotes the wavelength. Let $\eps_m$, $\eps_s$, and $\eps_0$ be the electric permittivities of $\Sigma_m$, $\Delta_s$, and $\mathbb{R}^2$, respectively. Then, we can introduce the piecewise-constant electric permittivity $\eps(\mx)$ and magnetic permeability $\mu(\mx)$ such that
\[\eps(\mx)=\left\{\begin{array}{ccl}
\medskip\eps_m&\mbox{for}&\mx\in\Sigma_m\\
\medskip\eps_s&\mbox{for}&\mx\in\Delta_s\\
\eps_0&\mbox{for}&\mx\in\mathbb{R}^2\backslash(\overline{\Sigma}\cup\overline{\Delta})
\end{array}\right.\]
and
\[\mu(\mx)=\left\{\begin{array}{ccl}
\medskip\mu_m&\mbox{for}&\mx\in\Sigma_m\\
\medskip\mu_s&\mbox{for}&\mx\in\Delta_s\\
\mu_0&\mbox{for}&\mx\in\mathbb{R}^2\backslash(\overline{\Sigma}\cup\overline{\Delta}),
\end{array}\right.\]
respectively. For the sake of simplicity, we let $\eps_0=\mu_0=1$, $\eps_m>\eps_s$, and $\mu_m>\mu_s$ for all $m$ and $s$. Hence, we can set the wavenumber $k=\omega\sqrt{\eps_0\mu_0}=\omega$.

For a given fixed frequency $\omega$, we denote
\[u_{\mathrm{inc}}(\mx,\vt)=e^{i\omega\vt\cdot\mx}\]
to be a plane-wave incident field with the incident direction $\vt\in\mathbb{S}^1$, where $\mathbb{S}^1$ denotes the two-dimensional unit circle. Let $u(\mx,\vt)$ denote the time-harmonic total field that satisfies the following Helmholtz equation
\[\nabla\cdot\left(\frac{1}{\mu(\mx)}\nabla u(\mx,\vt)\right)+\omega^2\eps(\mx)u(\mx,\vt)=0\]
with transmission conditions on the boundaries of $\Sigma_m$ and $\Delta_s$. This configuration is associated with a scalar scattering problem for an $E-$polarized (Transverse Magnetic-TM-polarization / corresponding to dielectric contrasts) field--the $H-$polarized (Transverse Electric-TE-polarization / corresponding to magnetic contrasts) case could be dealt with per duality. It is well known that $u(\mx,\vt)$ can be decomposed as
\[u(\mx,\vt)=u_{\mathrm{inc}}(\mx,\vt)+u_{\mathrm{scat}}(\mx,\vt),\]
where $u_{\mathrm{scat}}(\mx,\vt)$ denotes the unknown scattered field that satisfies the Sommerfeld radiation condition
\[\lim_{|\mx|\to0}\sqrt{|\mx|}\left(\frac{\p u_{\mathrm{scat}}(\mx,\vt)}{\p|\mx|}-i\omega u_{\mathrm{scat}}(\mx,\vt)\right)=0\]
uniformly in all directions $\vv=\frac{\mx}{|\mx|}\in\mathbb{S}^1$. The far-field pattern $u_{\mathrm{far}}(\vv,\vt)$ of the scattered field $u_{\mathrm{scat}}(\mx,\vt)$ is defined on $\mathbb{S}^1$. It can be expressed as
\[u_{\mathrm{scat}}(\mx,\vt)=\frac{e^{i\omega|\mx|}}{\sqrt{|\mx|}}u_{\mathrm{far}}(\vv,\vt)+o\left(\frac{1}{\sqrt{|\mx|}}\right),\quad|\mx|\longrightarrow+\infty.\]
Then by virtue of \cite{BF}, the far-field pattern $u_{\mathrm{far}}(\vv,\vt)$ can be written as the following asymptotic expansion formula, which plays a key role in the MUSIC-type algorithm that will be designed in the next section.
\begin{multline}\label{AsymptoticExpansionFormula}
  u_{\mathrm{far}}(\vv,\vt)=\frac{\omega^2(1+i)}{4\sqrt{\omega\pi}}\left\{\sum_{m=1}^{M}r_m^2\bigg((\eps_m-\eps_0)|\mB_m|-\frac{\mu_0}{\mu_m+\mu_0}(\sqrt2\vv)\cdot(\sqrt2\vt)\bigg)e^{i\omega(\vt-\vv)\cdot\mz_m}\right.\\
  \left.+\sum_{s=1}^{S}r_s^2\left(\frac{\eps_s-\eps_0}{\sqrt{\eps_0\mu_0}}|\mB_s|-\frac{\mu_0}{\mu_s+\mu_0}(\sqrt2\vv)\cdot(\sqrt2\vt)\right)e^{i\omega(\vt-\vv)\cdot\my_s}\right\}.
\end{multline}

\section{MUSIC-type imaging algorithm}\label{sec:3}
In this section, we introduce the MUSIC-type algorithm for detecting the locations of small inhomogeneities. For the sake of simplicity, we exclude the constant term $\frac{\omega^2(1+i)}{4\sqrt{\omega\pi}}$ from (\ref{AsymptoticExpansionFormula}). For this, let us consider  the eigenvalue structure of the MSR matrix
\[\mathbb{K}=\left[\begin{array}{cccc}
u_{\mathrm{far}}(\vv_1,\vt_1) & u_{\mathrm{far}}(\vv_1,\vt_2) & \cdots & u_{\mathrm{far}}(\vv_1,\vt_N)\\
u_{\mathrm{far}}(\vv_2,\vt_1) & u_{\mathrm{far}}(\vv_2,\vt_2) & \cdots & u_{\mathrm{far}}(\vv_2,\vt_N)\\
\vdots&\vdots&\ddots&\vdots\\
u_{\mathrm{far}}(\vv_N,\vt_1) & u_{\mathrm{far}}(\vv_N,\vt_2) & \cdots & u_{\mathrm{far}}(\vv_N,\vt_N)\\
\end{array}\right].\]
Suppose that $\vv_j=-\vt_j$ for all $j$, then $\mathbb{K}$ is a complex symmetric matrix but not a Hermitian. Thus, instead of eigenvalue decomposition, we perform singular value decomposition (SVD) of $\mathbb{K}$ (see \cite{C} for instance)
\begin{equation}\label{SVD}
  \mathbb{K}\approx\sum_{m=1}^{3M}\sigma_m\mathbf{U}_m\mathbf{V}_m^*+\sum_{s=3M+1}^{3M+3S}\sigma_s\mathbf{U}_s\mathbf{V}_s^*,
\end{equation}
where superscript $*$ is the mark of a Hermitian. Then, $\set{\mathbf{U}_1,\mathbf{U}_2,\cdots,\mathbf{U}_{3M+3S}}$ is the orthogonal basis for the signal space of $\mathbb{K}$. Therefore, one can define the projection operator onto the null (or noise) subspace, $\mathbf{P}_{\mathrm{noise}}:\mathbb{C}^{N\times1}\longrightarrow\mathbb{C}^{N\times1}$. This projection is given explicitly by
\begin{equation}
\mathbf{P}_{\mathrm{noise}}:=\mathbb{I}_N-\sum_{m=1}^{3M+3S}\mathbf{U}_m\mathbf{U}_m^*,
\end{equation}
where $\mathbb{I}_N$ denotes the $N\times N$ identity matrix. For any point $\mx\in\mathbb{R}^2$ and suitable vectors $\mc_n\in\mathbb{R}^3\backslash\set{\mathbf{0}}$, $n=1,2,\cdots,N$, define a test vector $\mf(\mx)\in\mathbb{C}^{N\times1}$ as
\begin{equation}\label{TestVector}
  \mf(\mx)=\bigg[\mc_1\cdot[1,\vt_1]^Te^{i\omega\vt_1\cdot\mx},\mc_2\cdot[1,\vt_2]^Te^{i\omega\vt_2\cdot\mx},\cdots,\mc_N\cdot[1,\vt_N]^Te^{i\omega\vt_N\cdot\mx}\bigg]^T.
\end{equation}
Then, by virtue of \cite{AK2}, there exists $N_0\in\mathbb{N}$ such that for any $N\geq N_0$, the following statement holds:
\[\mf(\mx)\in\mathrm{Range}(\mathbb{K\overline{\mathbb{K}}})\quad\mbox{if and only if}\quad\mx\in\set{\mz_m,\my_s}\]
for $m=1,2,\cdots,M$ and $s=1,2,\cdots,S$. This means that if $\mx\in\Sigma_m$ or $\mx\in\Delta_s$ then, $|\mathbf{P}_{\mathrm{noise}}(\mf(\mx))|=0$. Thus, the locations of $\Sigma_m$ and $\Delta_s$ follow from computing the MUSIC-type imaging function
\begin{equation}\label{MUSICimaging}
  \mathcal{F}(\mx)=\frac{1}{|\mathbf{P}_{\mathrm{noise}}(\mf(\mx))|}.
\end{equation}
The resulting plot of $\mathcal{F}(\mx)$ will have peaks of large magnitudes at $\mz_m\in\Sigma_m$ and $\my_s\in\Delta_s$.

\begin{Rem}
  Based on several works \cite{P-MUSIC1,PL3,HSZ1}, selection of $\mc_n$ in (\ref{TestVector}) is highly depending on the shape of $\Sigma_m$. Unfortunately, the shape of $\Sigma_m$ is unknown, it is impossible to find proper vectors $\mc_n$. Due to this fact, following from \cite{HSZ1}, we assume that $\mc_n\cdot[1,\vt_n]^T=1$ for all $n$, i.e., we consider the following test vector instead of (\ref{TestVector})
  \[\mf(\mx)=\frac{1}{\sqrt{N}}\bigg[e^{i\omega\vt_1\cdot\mx},e^{i\omega\vt_2\cdot\mx},\cdots,e^{i\omega\vt_N\cdot\mx}\bigg]^T\]
  and analyze the mathematical structure of $\mathcal{F}(\mx)$.
\end{Rem}

\section{Structure of imaging function}\label{sec:4}
Henceforth, we analyze the mathematical structure of $\mathcal{F}(\mx)$ and examine certain of its properties. Before starting, we recall a useful result derived in \cite{P-SUB3}.
\begin{lem}
Assume that $\set{\vt_n:n=1,2,\cdots,N}$ spans $\mathbb{S}^1$. Then, for sufficiently large $N$, $\vx\in\mathbb{S}^1$, and $\mx\in\mathbb{R}^2$, the following relation holds:
\begin{align}
\begin{aligned}\label{BesselRelation}
  &\frac{1}{N}\sum_{n=1}^{N}e^{i\omega\vt_n\cdot\mx}=\frac{1}{2\pi}\int_{\mathbb{S}^1}e^{i\omega\vt\cdot\mx}dS(\vt)=J_0(\omega|\mx|),\\
  &\frac{1}{N}\sum_{n=1}^{N}(\vx\cdot\vt_n)e^{i\omega\vt_n\cdot\mx}=\frac{1}{2\pi}\int_{\mathbb{S}^1}(\vx\cdot\vt)e^{i\omega\vt\cdot\mx}dS(\vt)=i\left(\frac{\mx}{|\mx|}\cdot\vx\right)J_1(\omega|\mx|),
\end{aligned}
\end{align}
where $J_n$ denotes Bessel function of order $n$ of the first kind.
\end{lem}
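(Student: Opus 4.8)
The plan is to treat the two claimed chains of equalities in each line separately, and within each to separate the discrete-to-integral passage from the subsequent evaluation of the integral in terms of Bessel functions.

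First I would address the left-hand equality in each line, namely the convergence of the normalized discrete average to the mean value over $\mathbb{S}^1$. Assuming the directions $\vt_n$ are distributed so as to span (equidistribute over) the circle---for instance $\vt_n=(\cos(2\pi n/N),\sin(2\pi n/N))$---the sum $\frac1N\sum_{n=1}^{N}g(\vt_n)$ is precisely the trapezoidal quadrature rule applied to $\frac{1}{2\pi}\int_{\mathbb{S}^1}g\,dS(\vt)$. Since both integrands $g(\vt)=e^{i\omega\vt\cdot\mx}$ and $g(\vt)=(\vx\cdot\vt)e^{i\omega\vt\cdot\mx}$ are smooth and $2\pi$-periodic in the angular variable, the quadrature error decays faster than any power of $1/N$, so that for sufficiently large $N$ the discrete average agrees with the integral to within the stated approximation.

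Then I would evaluate the two integrals by passing to polar coordinates. Writing $\vt=(\cos\phi,\sin\phi)$ and $\mx=|\mx|(\cos\psi,\sin\psi)$ gives $\vt\cdot\mx=|\mx|\cos(\phi-\psi)$, and a shift of the integration variable reduces the first integral to $\frac{1}{2\pi}\int_0^{2\pi}e^{i\omega|\mx|\cos\eta}\,d\eta$, which is exactly the classical integral representation of $J_0(\omega|\mx|)$. For the second, writing $\vx=(\cos\alpha,\sin\alpha)$ and expanding $\vx\cdot\vt=\cos(\eta+\psi-\alpha)$ via the addition formula, the $\sin\eta$ contribution vanishes by antisymmetry (equivalently, $\sin\eta\,e^{i\omega|\mx|\cos\eta}$ integrates to zero over the period as a total derivative), leaving $\cos(\psi-\alpha)\cdot\frac{1}{2\pi}\int_0^{2\pi}\cos\eta\,e^{i\omega|\mx|\cos\eta}\,d\eta$. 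The remaining integral equals $iJ_1(\omega|\mx|)$ by the order-one Bessel integral representation, and identifying $\cos(\psi-\alpha)=\frac{\mx}{|\mx|}\cdot\vx$ yields the claimed formula.

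The main obstacle is the first step: making precise the sense in which the hypothesis that $\set{\vt_n}$ spans $\mathbb{S}^1$ guarantees that the discrete average approximates the integral. The cleanest route is to commit to equidistributed directions and invoke the exponential accuracy of the trapezoidal rule for smooth periodic integrands. Alternatively one may argue through the Jacobi--Anger expansion $e^{i\omega\vt\cdot\mx}=\sum_{p}i^pJ_p(\omega|\mx|)e^{ip(\phi-\psi)}$, observing that the angular average annihilates every mode except $p=0$, while the discrete sum over equispaced nodes reproduces this mode exactly and introduces only aliased contributions of the form $J_{\pm N}(\omega|\mx|)$, which are negligible for large $N$ since $J_N(z)\to0$ rapidly in $N$ at fixed $z$. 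Once this passage is justified, the two integral evaluations are routine.
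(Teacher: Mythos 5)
Your proof is correct, and in fact it supplies an argument the paper itself omits: the paper does not prove this lemma at all, but simply recalls it from reference \cite{P-SUB3}, so there is no internal proof to compare against. Your evaluation of the two circle integrals is the standard and correct route: passing to polar coordinates reduces the first to $\frac{1}{2\pi}\int_0^{2\pi}e^{i\omega|\mx|\cos\eta}\,d\eta=J_0(\omega|\mx|)$, and for the second the antisymmetric $\sin\eta$ contribution vanishes over a full period while $\frac{1}{2\pi}\int_0^{2\pi}\cos\eta\,e^{iz\cos\eta}\,d\eta=iJ_1(z)$ together with $\cos(\psi-\alpha)=\bigl(\frac{\mx}{|\mx|}\bigr)\cdot\vx$ gives exactly the claimed factor $i\bigl(\frac{\mx}{|\mx|}\cdot\vx\bigr)J_1(\omega|\mx|)$; these are the classical Bessel integral representations, equivalently the $p=0$ and $p=\pm1$ modes of the Jacobi--Anger expansion. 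Your handling of the discrete-to-integral step is also more careful than the lemma's own statement: the hypothesis that $\set{\vt_n}$ ``spans'' $\mathbb{S}^1$ is too weak as literally written (two directions already span $\mathbb{R}^2$), and what is actually needed --- and what the paper uses in its numerical section, where the $\vt_j$ are equispaced on the circle --- is equidistribution. Committing to equispaced nodes and invoking either the spectral accuracy of the trapezoidal rule for smooth periodic integrands, or your aliasing argument (the discrete average reproduces the $p=0$ mode exactly and contaminates it only by terms $J_{qN}(\omega|\mx|)$, respectively $J_{qN\pm1}$ for the first-order identity, which decay superexponentially in $N$ at fixed $\omega|\mx|$), correctly explains the sense in which the stated equalities hold ``for sufficiently large $N$.'' The one cosmetic caveat is that for finite $N$ the first equality in each chain is an approximation rather than an identity, which your quadrature-error bound quantifies but the lemma's equal signs gloss over.
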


Now, we introduce the main result.
\begin{thm}\label{TheoremMUSIC}
For sufficiently large $N>3M+3S$ and $\omega$, $\mathcal{F}(\mx)$ can be represented as follows: for $\me_1=[1,0]^T$ and $\me_2=[0,1]^T$,
\begin{multline*}
  \mathcal{F}(\mx)\approx\left(1-\sum_{m=1}^{M}J_0(\omega|\mx-\mz_m|)^2-\sum_{m=1}^{M}\sum_{h=1}^2\bigg(\frac{(\mx-\mz_m)\cdot\me_h}{|\mx-\mz_m|}\bigg)^2J_1(\omega|\mx-\mz_m|)^2\right.\\
  \left.-\sum_{s=1}^{S}J_0(\omega|\mx-\my_s|)^2-\sum_{s=1}^{S}\sum_{h=1}^2\bigg(\frac{(\mx-\my_s)\cdot\me_h}{|\mx-\my_s|}\bigg)^2J_1(\omega|\mx-\my_s|)^2\right)^{-1/2}.
\end{multline*}
\end{thm}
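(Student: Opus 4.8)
The plan is to convert the computation of $\mathcal{F}(\mx)$ into the evaluation of a projection and then read off every inner product from \eqref{BesselRelation}. First I would note that every entry of $\mf(\mx)$ has unit modulus, so $|\mf(\mx)|=1$, and that $\mathbf{P}_{\mathrm{noise}}$ is an orthogonal projection complementary to $\mathbf{P}_{\mathrm{sig}}:=\sum_{m=1}^{3M+3S}\mathbf{U}_m\mathbf{U}_m^*$. Consequently
\[
|\mathbf{P}_{\mathrm{noise}}(\mf(\mx))|^2=|\mf(\mx)|^2-|\mathbf{P}_{\mathrm{sig}}\mf(\mx)|^2=1-|\mathbf{P}_{\mathrm{sig}}\mf(\mx)|^2,
\]
so that $\mathcal{F}(\mx)=(1-|\mathbf{P}_{\mathrm{sig}}\mf(\mx)|^2)^{-1/2}$ and the whole task reduces to computing the squared length of the projection of $\mf(\mx)$ onto the signal space.

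Next I would make the signal space explicit from \eqref{AsymptoticExpansionFormula}. Setting $\vv_j=-\vt_j$ gives $e^{i\omega(\vt_l-\vv_j)\cdot\mz_m}=e^{i\omega\vt_j\cdot\mz_m}e^{i\omega\vt_l\cdot\mz_m}$ and $\vv_j\cdot\vt_l=-\vt_j\cdot\vt_l$, while the planar identity $\vt_j\cdot\vt_l=\sum_{h=1}^2(\vt_j\cdot\me_h)(\vt_l\cdot\me_h)$ splits the permeability term into two coordinate contributions. Collecting factors, $\mathbb{K}$ becomes a sum of rank-one outer products: for each center $\mz_m$ one monopole vector $\mathbf{w}_0(\mz_m)$ with entries $N^{-1/2}e^{i\omega\vt_n\cdot\mz_m}$ (carrying the permittivity contrast) and two dipole vectors $\mathbf{w}_h(\mz_m)$, $h=1,2$, with entries $N^{-1/2}(\vt_n\cdot\me_h)e^{i\omega\vt_n\cdot\mz_m}$ (carrying the permeability contrast), together with the analogous $\mathbf{w}_0(\my_s),\mathbf{w}_h(\my_s)$ for the scatterers. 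This produces exactly $3M+3S$ generators, matching the count of nonzero singular values in \eqref{SVD}, so $\mathrm{Range}(\mathbb{K})$ is spanned by these vectors and coincides with $\mathrm{span}\{\mathbf{U}_m:1\le m\le 3M+3S\}$.

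The decisive step, which I expect to be the main obstacle, is to verify that these generators form an (approximately) orthonormal basis of the signal space, so that $\mathbf{P}_{\mathrm{sig}}$ may be evaluated on them. Applying \eqref{BesselRelation} with $\mx$ replaced by $\mz_m-\mz_{m'}$ yields $\langle\mathbf{w}_0(\mz_m),\mathbf{w}_0(\mz_{m'})\rangle=J_0(\omega|\mz_m-\mz_{m'}|)$, and the dipole cross products are proportional to $J_1(\omega|\mz_m-\mz_{m'}|)$; the separation hypothesis \eqref{Separated} together with large $\omega$ and the decay of $J_0,J_1$ forces every inter-center product to be negligible. At a single center, $J_1(0)=0$ decouples the monopole from the dipoles, and the vanishing angular average of $(\vt\cdot\me_1)(\vt\cdot\me_2)$ over $\mathbb{S}^1$ decouples the two dipoles; the remaining delicate point is to fix the normalizing constants, governed by the second angular moments of $\vt$, so that the family is genuinely orthonormal and may legitimately replace the true singular vectors $\mathbf{U}_m$. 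Controlling these off-diagonal Gram entries uniformly in $\mx$ and justifying the passage $N\to\infty$ is where the real work lies, since the coefficients in the final formula are exactly what this normalization determines.

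Finally I would read the two projections of $\mf(\mx)$ directly from \eqref{BesselRelation}. The first identity gives $\langle\mf(\mx),\mathbf{w}_0(\mz_m)\rangle=J_0(\omega|\mx-\mz_m|)$, and the second gives $\langle\mf(\mx),\mathbf{w}_h(\mz_m)\rangle=i\big(\tfrac{(\mx-\mz_m)\cdot\me_h}{|\mx-\mz_m|}\big)J_1(\omega|\mx-\mz_m|)$, with identical formulas centered at $\my_s$. Squaring, summing over the centers and over $h=1,2$, and substituting into $\mathcal{F}(\mx)=(1-|\mathbf{P}_{\mathrm{sig}}\mf(\mx)|^2)^{-1/2}$ then reproduces the bracketed expression of the statement, the $J_0^2$ terms arising from the monopole projections and the directional $J_1^2$ terms from the dipole projections.
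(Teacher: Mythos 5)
Your proposal is correct in substance and follows essentially the same route as the paper: the same identification of the signal space with one monopole and two dipole vectors per center extracted from the asymptotic expansion (the paper calls them $\mW_1,\mW_2,\mW_3$), the same Lemma giving the $J_0$ and $J_1$ identities for the inner products, and the same use of the separation condition (\ref{Separated}) together with the large-argument asymptotics of $J_0$ to discard inter-center Gram entries. Where you differ is purely organizational, and favorably so: you invoke the exact Pythagorean identity $|\mathbf{P}_{\mathrm{noise}}(\mf(\mx))|^2=1-|\mathbf{P}_{\mathrm{sig}}\mf(\mx)|^2$ and then compute three inner products per center, whereas the paper substitutes the approximate projector into $\mathbf{P}_{\mathrm{noise}}(\mf(\mx))$, expands the squared norm by hand into eight terms $\Phi_1,\dots,\Phi_8$, and verifies term by term that the cross contributions ($\Phi_1,\Phi_3,\Phi_5,\Phi_7$) cancel against the square contributions ($\Phi_2,\Phi_4,\Phi_6,\Phi_8$) down to the same totals --- which is exactly the Pythagorean identity re-derived componentwise. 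Your shortcut is shorter and less typo-prone. One small slip: the dipole--dipole cross products at distinct centers involve second angular moments of $\vt$ and hence combinations of $J_0$ and $J_2$ evaluated at $\omega|\mz_m-\mz_{m'}|$, not $J_1$ (only the monopole--dipole products give $J_1$); the decay conclusion you draw from (\ref{Separated}) is unaffected.

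You correctly identify the normalization of the dipole vectors as the delicate point, but you do not resolve it consistently, and this is worth making explicit: since $\frac{1}{N}\sum_n(\vt_n\cdot\me_h)^2\approx\frac12$, the orthonormalized dipole generators must carry the factor $\sqrt{2}$ (as in the paper's $\mW_2,\mW_3$, and as anticipated by the $(\sqrt2\vv)\cdot(\sqrt2\vt)$ grouping in (\ref{AsymptoticExpansionFormula})). With the orthonormal family, $|\langle\mf(\mx),\sqrt2\,\mathbf{w}_h(\mz_m)\rangle|^2=2\big(\frac{(\mx-\mz_m)\cdot\me_h}{|\mx-\mz_m|}\big)^2J_1(\omega|\mx-\mz_m|)^2$, so a fully consistent Pythagorean computation yields coefficient $2$ on the $J_1^2$ sums, whereas your final step (and the stated theorem) has coefficient $1$ because the unnormalized $\mathbf{w}_h$ are used in the projection sum. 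Note that the paper's own proof contains the identical tension --- its $\mathbb{B}_h$ vectors drop the $(\sqrt2)^2$ factor relative to the displayed $\mW_2,\mW_3$ --- so you reproduce the stated formula precisely to the extent that you reproduce the paper's bookkeeping; the discrepancy is a harmless rescaling of the $J_1^2$ sidelobe terms and does not alter the qualitative conclusions (peaks at $\mz_m$ and $\my_s$ driven by $J_0(0)=1$), but in a rigorous write-up you should either carry the factor $2$ or state that the unnormalized generators are being used as an approximation to the singular vectors.
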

\begin{proof}
Based on the asymptotic expansion formula (\ref{AsymptoticExpansionFormula}) and results in \cite{AGKPS}, $\mathbf{P}_{\mathrm{noise}}$ can be represented as
  \[\mathbf{P}_{\mathrm{noise}}=\mathbb{I}_N-\sum_{m=1}^{3M}\mU_m\mU_m^*-\sum_{s=1}^{3S}\mU_s\mU_s^*\approx\mathbb{I}_N-\sum_{m=1}^{M}\sum_{h=1}^{3}\mW_h(\mz_m)\mW_h(\mz_m)^*-\sum_{s=1}^{S}\sum_{h=1}^{3}\mW_h(\my_s)\mW_h(\my_s)^*,\]
  where
  \begin{align*}
    \mW_1(\mx)&=\frac{1}{\sqrt{N}}\bigg[e^{i\omega\vt_1\cdot\mx},e^{i\omega\vt_2\cdot\mx}\cdots,e^{i\omega\vt_N\cdot\mx}\bigg]^T,\\
    \mW_2(\mx)&=\frac{\sqrt{2}}{\sqrt{N}}\bigg[(\me_1\cdot\vt_1)e^{i\omega\vt_1\cdot\mx},(\me_1\cdot\vt_2)e^{i\omega\vt_2\cdot\mx},\cdots,(\me_1\cdot\vt_N)e^{i\omega\vt_N\cdot\mx}\bigg]^T,\\
    \mW_3(\mx)&=\frac{\sqrt{2}}{\sqrt{N}}\bigg[(\me_2\cdot\vt_1)e^{i\omega\vt_1\cdot\mx},(\me_2\cdot\vt_2)e^{i\omega\vt_2\cdot\mx},\cdots,(\me_2\cdot\vt_N)e^{i\omega\vt_N\cdot\mx}\bigg]^T.
  \end{align*}
  With this, applying (\ref{BesselRelation}) and performing a tedious calculation, we arrive at
  \[\mathbf{P}_{\mathrm{noise}}(\mf(\mx))=\mf(\mx)-\frac{1}{N\sqrt{N}}\left(\sum_{m=1}^{M}(\mathbb{A}(\mz_m)+\mathbb{B}_1(\mz_m)+\mathbb{B}_2(\mz_m))-\sum_{s=1}^{S}(\mathbb{A}(\my_s)+\mathbb{B}_1(\my_s)+\mathbb{B}_2(\my_s))\right),\]
  where
  \[\mathbb{A}(\vx):=\left[\begin{array}{c}
    \medskip e^{i\omega\vt_1\cdot\vx}J_0(\omega|\mx-\vx|) \\
    \medskip e^{i\omega\vt_2\cdot\vx}J_0(\omega|\mx-\vx|) \\
    \medskip\vdots \\
    e^{i\omega\vt_N\cdot\vx}J_0(\omega|\mx-\vx|)
    \end{array}\right],\]
  and
  \[\mathbb{B}_h(\vx):=\left[\begin{array}{c}
    \medskip\displaystyle i(\me_h\cdot\vt_1)\left(\frac{(\mx-\vx)\cdot\me_h}{|\mx-\vx|}\right)e^{i\omega\vt_1\cdot\vx}J_1(\omega|\mx-\vx|)\\
    \medskip\displaystyle i(\me_h\cdot\vt_2)\left(\frac{(\mx-\vx)\cdot\me_h}{|\mx-\vx|}\right)e^{i\omega\vt_2\cdot\vx}J_1(\omega|\mx-\vx|)\\
    \medskip\displaystyle\vdots \\
    \medskip\displaystyle i(\me_h\cdot\vt_N)\left(\frac{(\mx-\vx)\cdot\me_h}{|\mx-\vx|}\right)e^{i\omega\vt_N\cdot\vx}J_1(\omega|\mx-\vx|)
    \end{array}\right]\]
  for $\vx\in\mathbb{R}^2$ and $h=1,2$. By implementing elementary calculus, we can show that
  \[|\mathbf{P}_{\mathrm{noise}}(\mf(\mx))|^2=\mathbf{P}_{\mathrm{noise}}(\mf(\mx))\overline{\mathbf{P}_{\mathrm{noise}}(\mf(\mx))}
    =\frac{1}{N}\sum_{n=1}^{N}\bigg(1-\sum_{h=1}^{8}\Phi_h\bigg),\]
  where
  \begin{align*}
    \Phi_1=&\sum_{m=1}^{M}\bigg(e^{i\omega\vt_n\cdot(\mx-\mz_m)}+e^{-i\omega\vt_n\cdot(\mx-\mz_m)}\bigg)J_0(\omega|\mx-\mz_m|),\\
    \Phi_2=&-\left(\sum_{m=1}^{M}e^{i\omega\vt_n\cdot\mz_m}J_0(\omega|\mx-\mz_m|)\right)\left(\sum_{m'=1}^{M}e^{-i\omega\vt_n\cdot\mz_{m'}}J_0(\omega|\mx-\mz_{m'}|)\right),\\
    \Phi_3=&-i\sum_{m=1}^{M}\sum_{h=1}^{2}(\me_h\cdot\vt_n)\left(\frac{(\mx-\mz_m)\cdot\me_h}{|\mx-\mz_m|}\right)\bigg(e^{i\omega\vt_n\cdot(\mx-\mz_m)}-e^{-i\omega\vt_n\cdot(\mx-\mz_m)}\bigg)J_1(\omega|\mx-\mz_m|)\\
    \Phi_4=&-\left(\sum_{m=1}^{M}\sum_{h=1}^{2}(\me_h\cdot\vt_n)\left(\frac{(\mx-\mz_m)\cdot\me_h}{|\mx-\mz_m|}\right)e^{i\omega\vt_n\cdot\mz_m}J_1(\omega|\mx-\mz_m|)\right)\\
    &\times\left(\sum_{m'=1}^{M}\sum_{h=1}^{2}(\me_h\cdot\vt_n)\left(\frac{(\mx-\mz_{m'})\cdot\me_h}{|\mx-\mz_{m'}|}\right)e^{i\omega\vt_n\cdot\mz_{m'}}J_1(\omega|\mx-\mz_{m'}|)\right),\\
    \Phi_5=&\sum_{s=1}^{S}\bigg(e^{i\omega\vt_n\cdot(\mx-\my_s)}+e^{-i\omega\vt_n\cdot(\mx-\my_s)}\bigg)J_0(\omega|\mx-\my_s|),\\
    \Phi_6=&-\left(\sum_{s=1}^{S}e^{i\omega\vt_n\cdot\my_s}J_0(\omega|\mx-\my_s|)\right)\left(\sum_{s'=1}^{S}e^{-i\omega\vt_n\cdot\my_{s'}}J_0(\omega|\mx-\my_{s'}|)\right),\\
    \Phi_7=&-i\sum_{s=1}^{M}\sum_{h=1}^{2}(\me_h\cdot\vt_n)\left(\frac{(\mx-\my_s)\cdot\me_h}{|\mx-\my_s|}\right)\bigg(e^{i\omega\vt_n\cdot(\mx-\my_s)}+e^{-i\omega\vt_n\cdot(\mx-\my_s)}\bigg)J_1(\omega|\mx-\my_s|)\\
    \Phi_8=&-\left(\sum_{s=1}^{M}\sum_{h=1}^{2}(\me_h\cdot\vt_n)\left(\frac{(\mx-\my_s)\cdot\me_h}{|\mx-\my_s|}\right)e^{i\omega\vt_n\cdot\my_s}J_1(\omega|\mx-\my_s|)\right)\\
    &\times\left(\sum_{s'=1}^{M}\sum_{h=1}^{2}(\me_h\cdot\vt_n)\left(\frac{(\mx-\mz_{s'})\cdot\me_h}{|\mx-\mz_{s'}|}\right)e^{i\omega\vt_n\cdot\mz_{s'}}J_1(\omega|\mx-\mz_{s'}|)\right).
  \end{align*}

  First, applying (\ref{BesselRelation}), we can obtain
  \[\frac{1}{N}\sum_{n=1}^{N}\sum_{m=1}^{M}e^{i\omega\vt_n\cdot(\mx-\mz_m)}J_0(\omega|\mx-\mz_m|)=\sum_{m=1}^{M}J_0(\omega|\mx-\mz_m|)^2.\]  This leads us to
  \begin{equation}\label{term1}
    \frac{1}{N}\sum_{n=1}^{N}\Phi_1=\frac{1}{N}\sum_{n=1}^{N}\sum_{m=1}^{M}\bigg(e^{i\omega\vt_n\cdot(\mx-\mz_m)}+e^{-i\omega\vt_n\cdot(\mx-\mz_m)}\bigg)J_0(\omega|\mx-\mz_m|)=2\sum_{m=1}^{M}J_0(\omega|\mx-\mz_m|)^2
  \end{equation}
  and similarly to
  \begin{equation}\label{term2}
    \frac{1}{N}\sum_{n=1}^{N}\Phi_5=\frac{1}{N}\sum_{n=1}^{N}\sum_{s=1}^{S}\bigg(e^{i\omega\vt_n\cdot(\mx-\my_s)}+e^{-i\omega\vt_n\cdot(\mx-\my_s)}\bigg)J_0(\omega|\mx-\my_s|)=2\sum_{s=1}^{S}J_0(\omega|\mx-\my_s|)^2.
  \end{equation}

  Next, based on the orthonormal property of singular vectors, relations (\ref{Separated}) and (\ref{BesselRelation}), and the following asymptotic form
  \[J_0(\omega|\mz-\mz_{m'}|)\approx\sqrt{\frac{2}{\omega|\mz-\mz_{m'}|\pi}}\cos\left(\omega|\mz-\mz_{m'}|-\frac{\pi}{4}\right),\]
  we can derive
  \begin{align}
  \begin{aligned}\label{term3}
    \frac{1}{N}\sum_{n=1}^{N}\Phi_2&=-\frac{1}{N}\sum_{n=1}^{N}\left(\sum_{m=1}^{M}e^{i\omega\vt_n\cdot\mz_m}J_0(\omega|\mx-\mz_m|)\right)\left(\sum_{m'=1}^{M}e^{-i\omega\vt_n\cdot\mz_{m'}}J_0(\omega|\mx-\mz_{m'}|)\right)\\
    &=-\sum_{m=1}^{M}\sum_{m'=1}^{M}\left(\frac{1}{N}\sum_{n=1}^{N}e^{i\omega\vt_n\cdot(\mz-\mz_{m'})} J_0(\omega|\mx-\mz_m|)J_0(\omega|\mx-\mz_{m'}|)\right)\\
    &=-\sum_{m=1}^{M}\sum_{m'=1}^{M}J_0(\omega|\mz-\mz_{m'}|)J_0(\omega|\mx-\mz_m|)J_0(\omega|\mx-\mz_{m'}|)\\
    &=-\sum_{m=1}^{M}J_0(\omega|\mx-\mz_m|)^2.
  \end{aligned}
  \end{align}
  and similarly
  \begin{equation}\label{term4}
  \frac{1}{N}\sum_{n=1}^{N}\Phi_6=-\sum_{s=1}^{S}J_0(\omega|\mx-\my_s|)^2.
  \end{equation}

  For evaluating $\Phi_3$, let us perform an elementary calculus
  \begin{align*}
    \frac{1}{N}&\sum_{n=1}^{N}\left(i\sum_{m=1}^{M}\sum_{h=1}^{2}(\me_h\cdot\vt_n)e^{i\omega\vt_n\cdot(\mx-\mz_m)}\right)\left(\frac{(\mx-\mz_m)\cdot\me_h}{|\mx-\mz_m|}\right)J_1(\omega|\mx-\mz_m|)\\
    &=\sum_{m=1}^{M}\sum_{h=1}^{2}\left(i\frac{1}{N}\sum_{n=1}^{N}(\me_h\cdot\vt_n)e^{i\omega\vt_n\cdot(\mx-\mz_m)}\right)\left(\frac{(\mx-\mz_m)\cdot\me_h}{|\mx-\mz_m|}\right)J_1(\omega|\mx-\mz_m|)\\
    &=-\sum_{m=1}^{M}\sum_{h=1}^{2}\left(\frac{(\mx-\mz_m)\cdot\me_h}{|\mx-\mz_m|}\right)^2J_1(\omega|\mx-\mz_m|)^2.
  \end{align*}
  Then, we can conclude that
  \begin{equation}\label{term5}
    \frac{1}{N}\sum_{n=1}^{N}\Phi_3=2\sum_{m=1}^{M}\sum_{h=1}^{2}\left(\frac{(\mx-\mz_m)\cdot\me_h}{|\mx-\mz_m|}\right)^2J_1(\omega|\mx-\mz_m|)^2
  \end{equation}
  and
  \begin{equation}\label{term6}
    \frac{1}{N}\sum_{n=1}^{N}\Phi_7=2\sum_{s=1}^{S}\sum_{h=1}^{2}\left(\frac{(\mx-\my_s)\cdot\me_h}{|\mx-\my_s|}\right)^2J_1(\omega|\mx-\my_s|)^2.
  \end{equation}

  Finally, for $\Phi_4$, by applying following integral: for $\vt_n,\vt,\vx\in\mathbb{S}^1$, \[\frac{1}{N}\sum_{n=1}^{N}(\vt_n\cdot\vx)^2\approx\frac{1}{2\pi}\int_{\mathbb{S}^1}(\vt\cdot\vx)^2d\vt=\frac12,\]
  we can derive the following:
  \begin{align}
  \begin{aligned}\label{term7}
    \frac{1}{N}\sum_{n=1}^{N}\Phi_4=-&\frac{1}{N}\sum_{n=1}^{N}\left(\sum_{m=1}^{M}\sum_{h=1}^{2}(\me_h\cdot\vt_n)\left(\frac{(\mx-\mz_m)\cdot\me_h}{|\mx-\mz_m|}\right)e^{i\omega\vt_n\cdot\mz_m}J_1(\omega|\mx-\mz_m|)\right)\\
    &\times\left(\sum_{m'=1}^{M}\sum_{h''=1}^{2}(\me_{h''}\cdot\vt_n)\left(\frac{(\mx-\mz_{m'})\cdot\me_{h''}}{|\mx-\mz_{m'}|}\right)e^{i\omega\vt_n\cdot\mz_{m'}}J_1(\omega|\mx-\mz_{m'}|)\right)\\
    =-&\sum_{m=1}^{M}\left(\frac{1}{N}\sum_{n=1}^{N}\sum_{h=1}^{2}(\me_s\cdot\vt_n)^2\right)^2\sum_{h=1}^{2}\left\{\bigg(\frac{(\mx-\mz_m)\cdot\me_s}{|\mx-\mz_m|}\bigg)J_1(\omega|\mx-\mz_m|)\right\}^2\\
    =-&\sum_{m=1}^{M}\sum_{h=1}^{2}\left\{\bigg(\frac{(\mx-\mz_m)\cdot\me_h}{|\mx-\mz_m|}\bigg)J_1(\omega|\mx-\mz_m|)\right\}^2.
  \end{aligned}
  \end{align}
  Correspondingly,
  \begin{equation}\label{term8}
    \frac{1}{N}\sum_{n=1}^{N}\Phi_8=-\sum_{s=1}^{S}\sum_{h=1}^{2}\left\{\bigg(\frac{(\mx-\my_s)\cdot\me_h}{|\mx-\my_s|}\bigg)J_1(\omega|\mx-\my_s|)\right\}^2.
  \end{equation}
  Hence, by combining (\ref{term1})--(\ref{term8}), we can obtain the following mathematical structure
  \begin{multline*}
    |\mathbf{P}_{\mathrm{noise}}(\mf(\mr))|^2=1-\sum_{m=1}^{M}J_0(\omega|\mx-\mz_m|)^2-\sum_{m=1}^{M}\sum_{h=1}^2\bigg(\frac{(\mx-\mz_m)\cdot\me_h}{|\mx-\mz_m|}\bigg)^2J_1(\omega|\mx-\mz_m|)^2\\
  -\sum_{s=1}^{S}J_0(\omega|\mx-\my_s|)^2-\sum_{s=1}^{S}\sum_{h=1}^2\bigg(\frac{(\mx-\my_s)\cdot\me_h}{|\mx-\my_s|}\bigg)^2J_1(\omega|\mx-\my_s|)^2.
  \end{multline*}
  This enables us to obtain the desired result. This completes the proof.
\end{proof}

\begin{Rem}[Applicability of MUSIC]\label{Remark1}
Since $J_0(0)=1$, the value of $\mathcal{F}(\mx)$ will be sufficiently large when $\mx=\mz_m$ or $\my_s$ for all $m$ and $s$. Hence, based on the result in Theorem \ref{TheoremMUSIC}, the locations of $\Sigma_m$ and $\Delta_s$ can be identified via the map of $\mathcal{F}(\mz)$. This is the reason why it is possible to detect the locations of small inhomogeneities as well as random scatterers. Note that for a successful detection, based on the hypothesis in Theorem \ref{TheoremMUSIC}, the value of $N$ (at least, greater than $3M+3S$) and $\omega$ must be sufficiently large enough. If applied frequency is low or total number of $N$ is small, poor result would appear in the map of $\mathcal{F}(\mx)$.
\end{Rem}

\begin{Rem}[Discrimination of singular values]\label{Remark2}
  Theoretically, if the size, permittivity, and permeability of the random scatterers are smaller than those of the inhomogeneities, then $\sigma_s<\sigma_m$ for all $m$ and $s$. This means that if it were possible to discriminate singular values associated with small inhomogeneities then, the structure of $\mathcal{F}(\mz)$ would become
  \[\mathcal{F}(\mx)\approx\left(1-\sum_{m=1}^{M}J_0(\omega|\mx-\mz_m|)-\sum_{m=1}^{M}\sum_{h=1}^2\bigg(\frac{(\mx-\mz_m)\cdot\me_h}{|\mx-\mz_m|}\bigg)^2J_1(\omega|\mx-\mz_m|)^2\right)^{-1/2}.\]
  Hence, it is expected that more good results can be obtained. Our approach presents an improvement. However, if the relation $\sigma_s<\sigma_m$ were no longer valid, the locations of random scatterers would have to be identified via MUSIC such that poor results would appear in the map of $\mathcal{F}(\mx)$.
\end{Rem}

\section{Results of numerical simulations}\label{sec:5}
Selected results of numerical simulations are presented here to support the identified structure of the MUSIC-type imaging function. In this section, we only consider the dielectric permittivity contrast case, i.e., we set $\eps_m=3$, $\eps_0=1$, and $\mu_m=\mu_s=\mu_0$ for all $m$ and $s$. The radius of all $\Sigma_m$ and $\Delta_s$ are set to $0.1$ and $0.05$, respectively. The applied angular frequency is $\omega=2\pi/\lambda$ and a total of $N$ number of incident directions are applied such that
\[\vt_j=-\left[\cos\frac{2\pi(j-1)}{N},\frac{2\pi(j-1)}{N}\right]^T,\quad j=1,2,\cdots,N.\]

$M=3$ small inhomogeneities are selected with locations $\mz_1=[0.25,0]^T$, $\mz_2=[-0.4,0.5]^T$, and $\mz_3=[-0.3,-0.7]^T$. We set $S=100$ number of small scatterers as being randomly distributed in $\Omega=[-1,1]\times[-1,1]\subset\mathbb{R}^2$ such that
\[\my_s=[\eta_1(-1,1),\eta_2(-1,1)]^T\]
for all $s$ and also select the permittivities randomly as
\[\eps_s=\eta_3(1,2),\]
where $\eta_p(a,b)$, and $p=1,2,$ and $3,$ is an arbitrary real value within $[a,b]$. Refer to Fig. \ref{Distibution} for a sketch of the distribution of the three inhomogeneities and random scatterers.

\begin{figure}[h]
\centering
\includegraphics[width=0.5\textwidth]{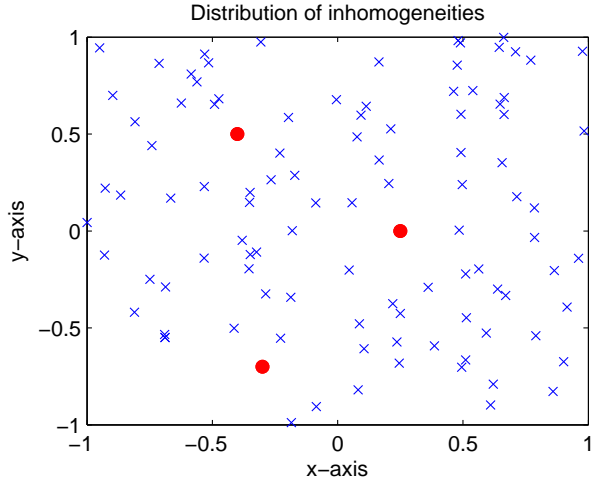}
\caption{\label{Distibution}Distribution of inhomogeneities (red-colored dots) and random scatterers (blue-colored `$\times$' mark).}
\end{figure}

The far-field elements of MSR matrix $\mathbb{K}$ is generated by means of the Foldy-Lax framework   to avoid an \textit{inverse crime}. After the generation, a singular value decomposition of $\mathbb{K}$ is performed via the MATLAB command \texttt{svd}. The nonzero singular values of $\mathbb{K}$ are discriminated as follows: first, a $0.1-$threshold scheme (by first choosing the $j$ singular values $\sigma_j$ such that $\frac{\sigma_j}{\sigma_1}\geq0.1$) is applied based on \cite{PL3} and second, the first $3-$singular values are selected.

Fig. \ref{Result} exhibits the distribution of the normalized singular values of $\mathbb{K}$ and maps of $\mathcal{F}(\mx)$ with the $0.1-$threshold scheme and with selection of the first $3-$singular values when $\lambda=0.3$ and $N=32$. Note that due to the huge number of artifacts it is very hard to identify the locations of $\Sigma_m$ with the $0.1-$threshold scheme but, fortunately in this example, one can discriminate three nonzero singular values such that, based on the following Remark \ref{Remark2}, the locations of $\Sigma_m$ can be identified more clearly. This result supports the derived mathematical structure in Theorem \ref{TheoremMUSIC}.

\begin{figure}[h]
\centering
\includegraphics[width=0.49\textwidth]{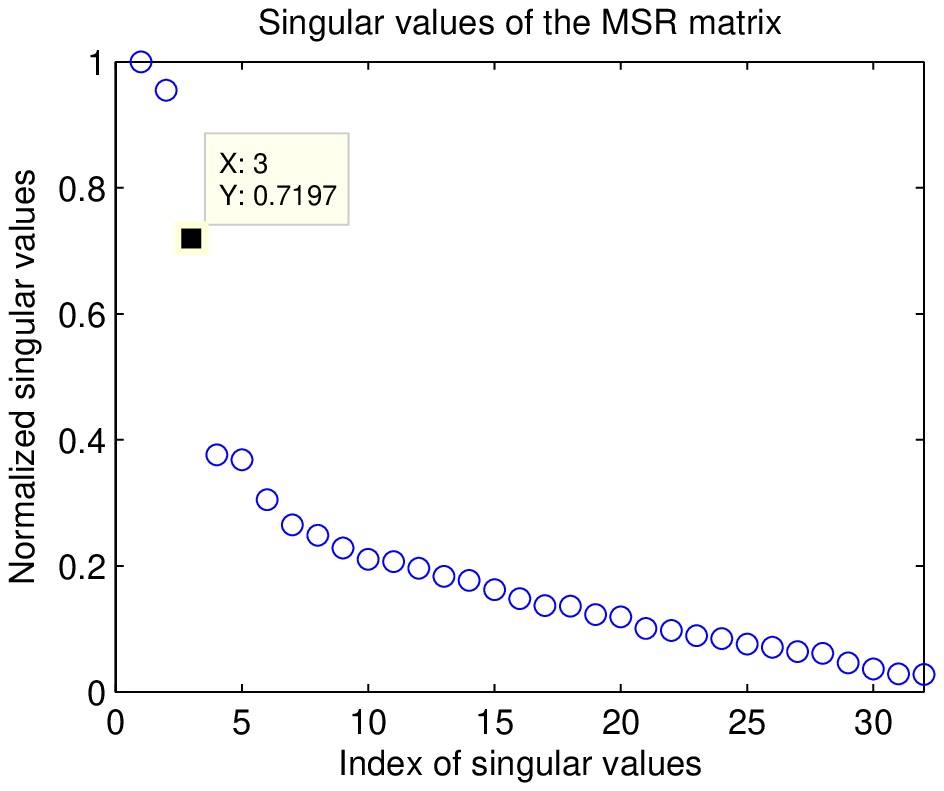}
\includegraphics[width=0.49\textwidth]{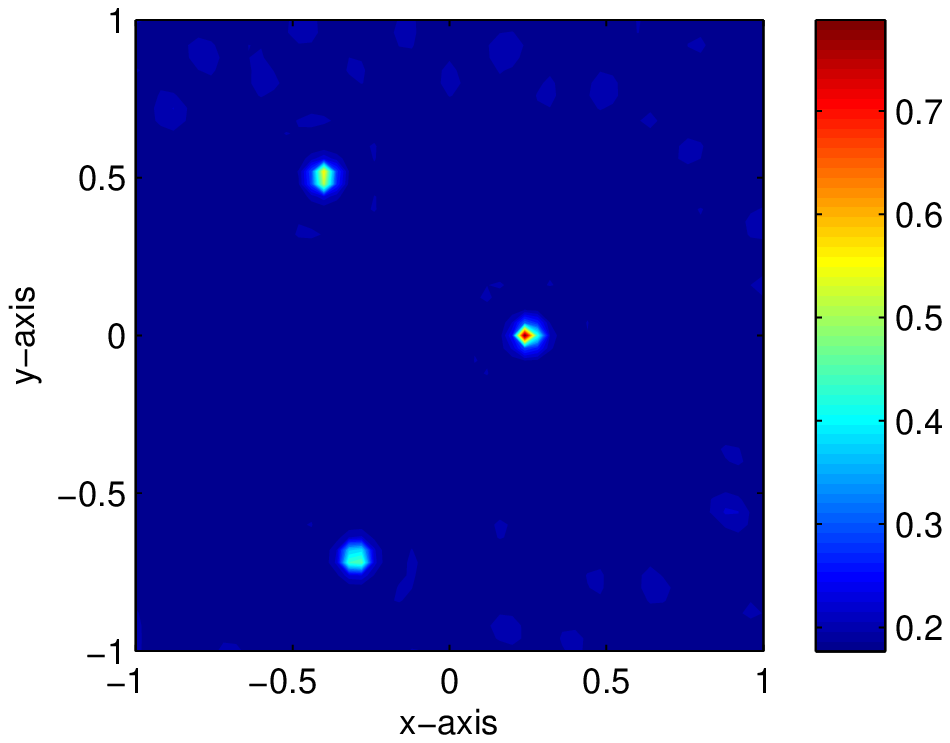}\\
\includegraphics[width=0.49\textwidth]{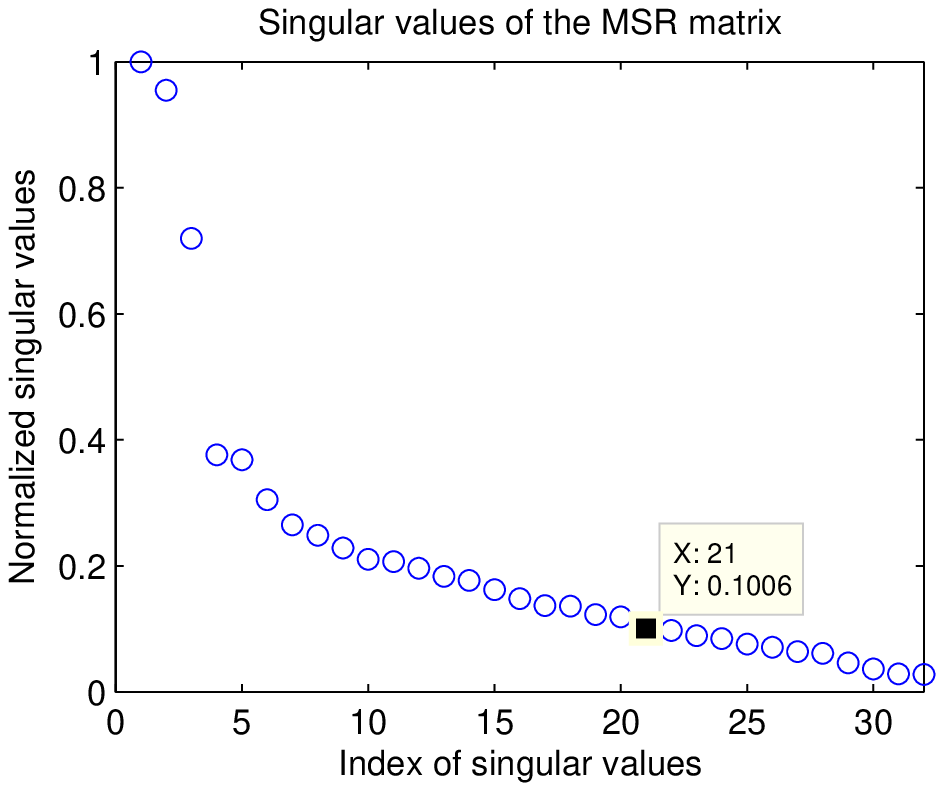}
\includegraphics[width=0.49\textwidth]{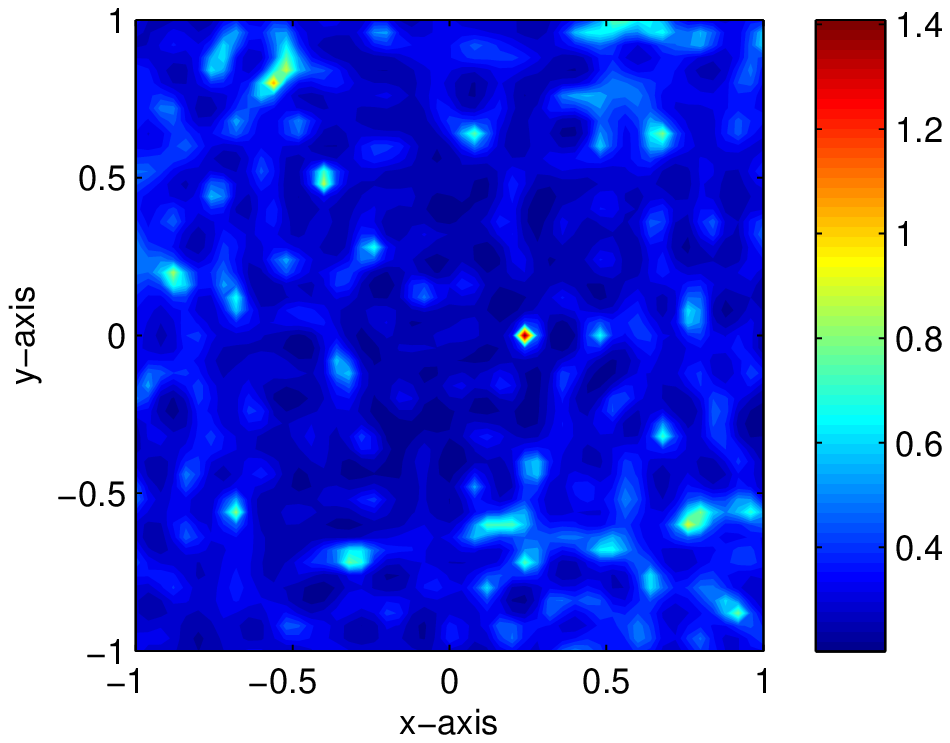}
\caption{Distribution of normalized singular values (left column) and maps of $\mathcal{F}(\mx)$ with first $3-$singular values (top, right) and with $0.1-$threshold scheme (bottom, right).}\label{Result}
\end{figure}

Now, let us examine the effect of total number of directions $N$ in the extreme cases. Figure \ref{Result-small} exhibits normalized singular values and map of $\mathcal{F}(\mx)$ with small number of $N=5$ when $\lambda=0.4$. Based on Remark \ref{Remark1}, the value of $N$ must be sufficiently large so, as we expected, locations of $\Sigma_m$ cannot be identified via the map of $\mathcal{F}(\mx)$ with small $N$.

\begin{figure}[h]
\centering
\includegraphics[width=0.49\textwidth]{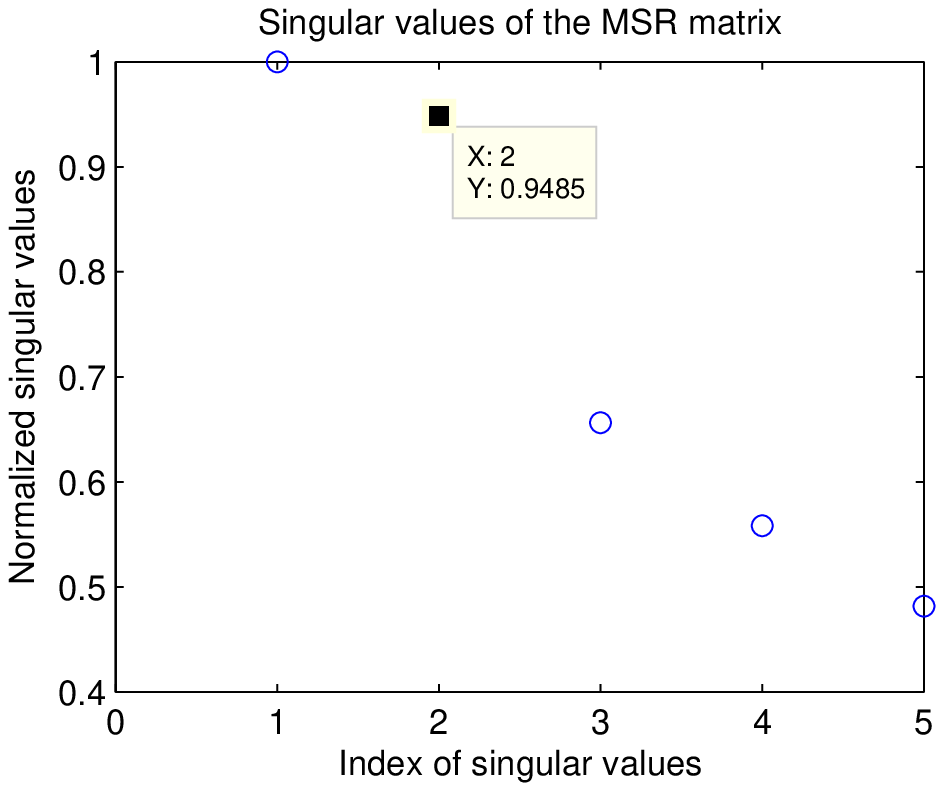}
\includegraphics[width=0.49\textwidth]{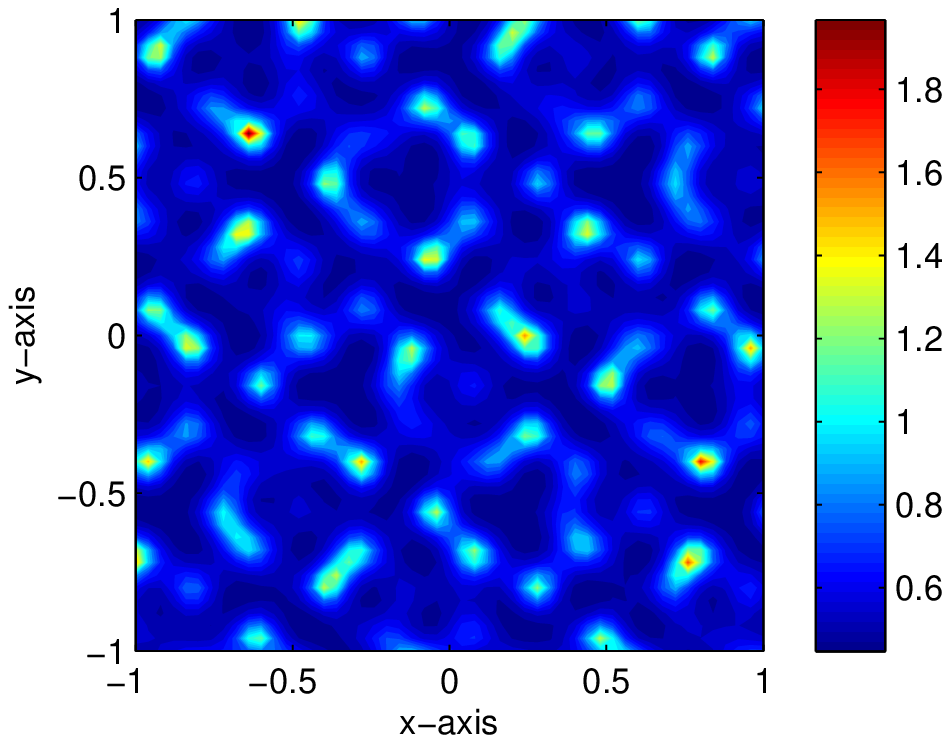}
\caption{Distribution of normalized singular values (left) and map of $\mathcal{F}(\mx)$ with first $2-$singular values (right).}\label{Result-small}
\end{figure}

Opposite to the previous result, Figure \ref{Result-large} displays normalized singular values and maps of $\mathcal{F}(\mx)$ with large number of $N=256$ when $\lambda=0.4$. Similar to the results in Figure \ref{Result}, locations of $\Sigma_m$ can be examined clearly via the selection of first $3-$singular values. Applying $0.1-$threshold, it is very hard to identify locations of $\Sigma_m$ but, opposite to the result in Figure \ref{Result}, their locations can be recognized even though some artifacts are still exist.

\begin{figure}[h]
\centering
\includegraphics[width=0.49\textwidth]{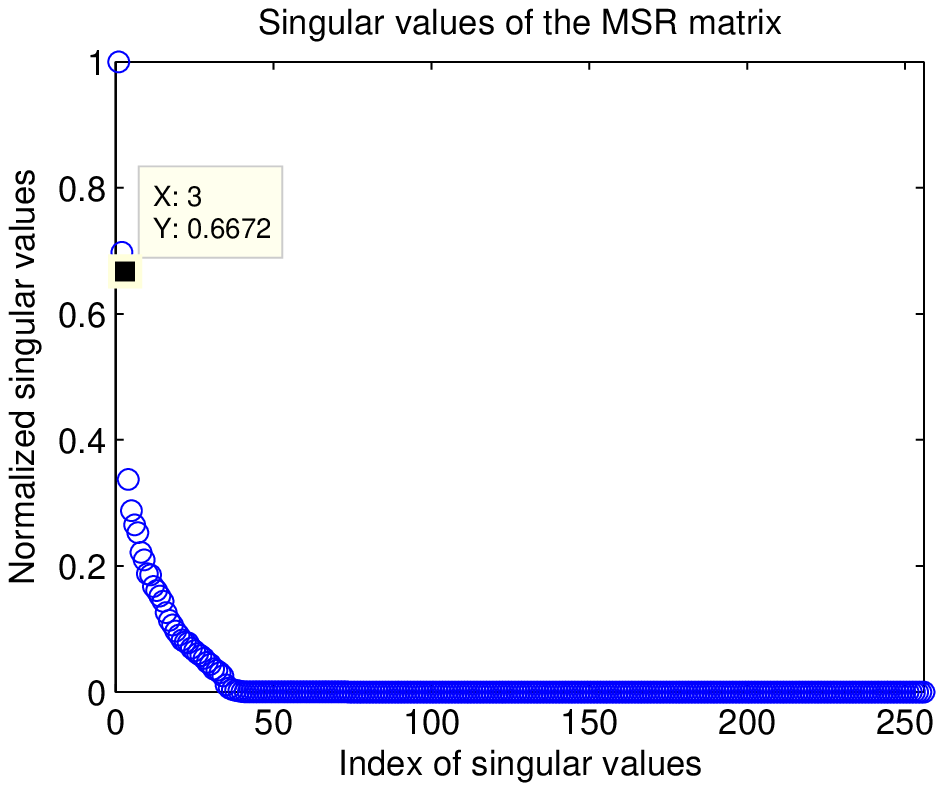}
\includegraphics[width=0.49\textwidth]{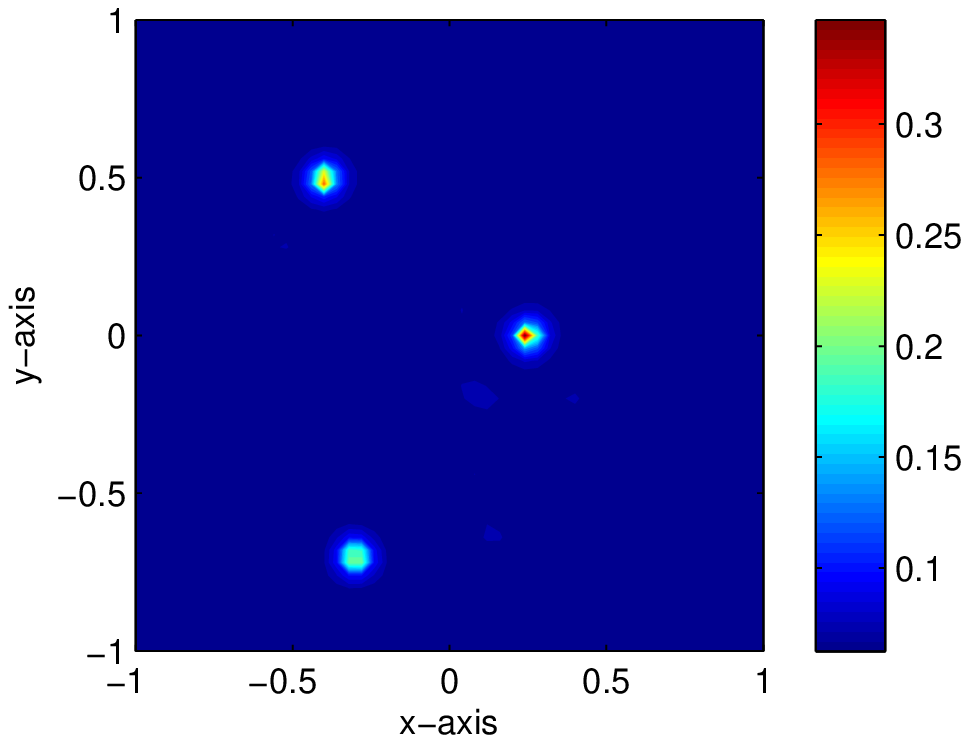}\\
\includegraphics[width=0.49\textwidth]{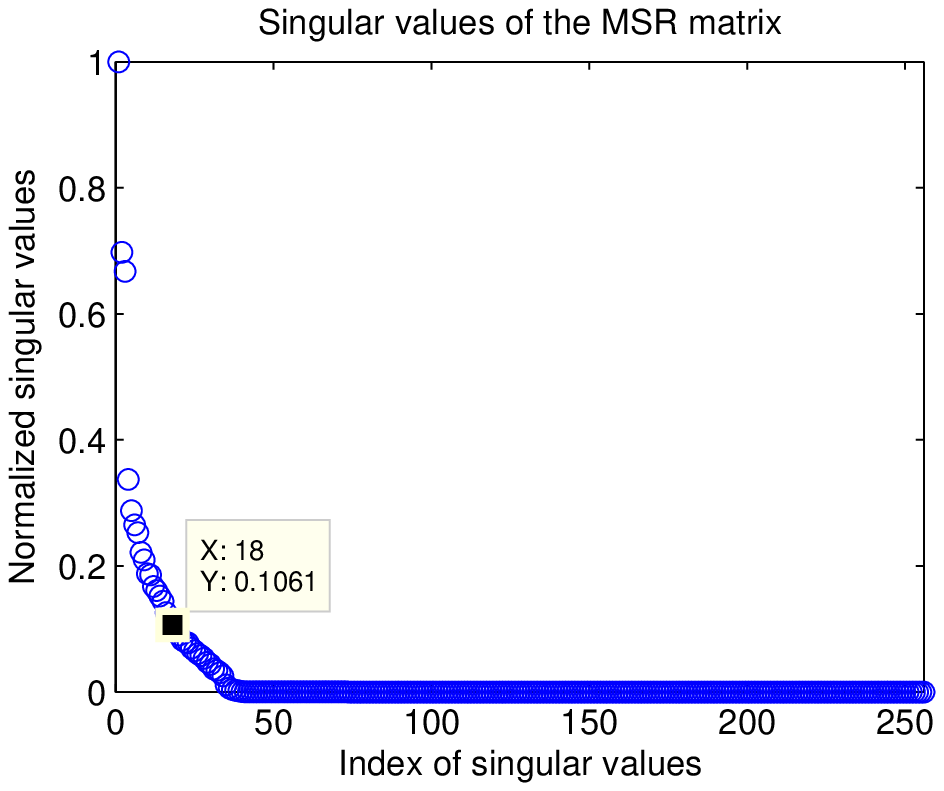}
\includegraphics[width=0.49\textwidth]{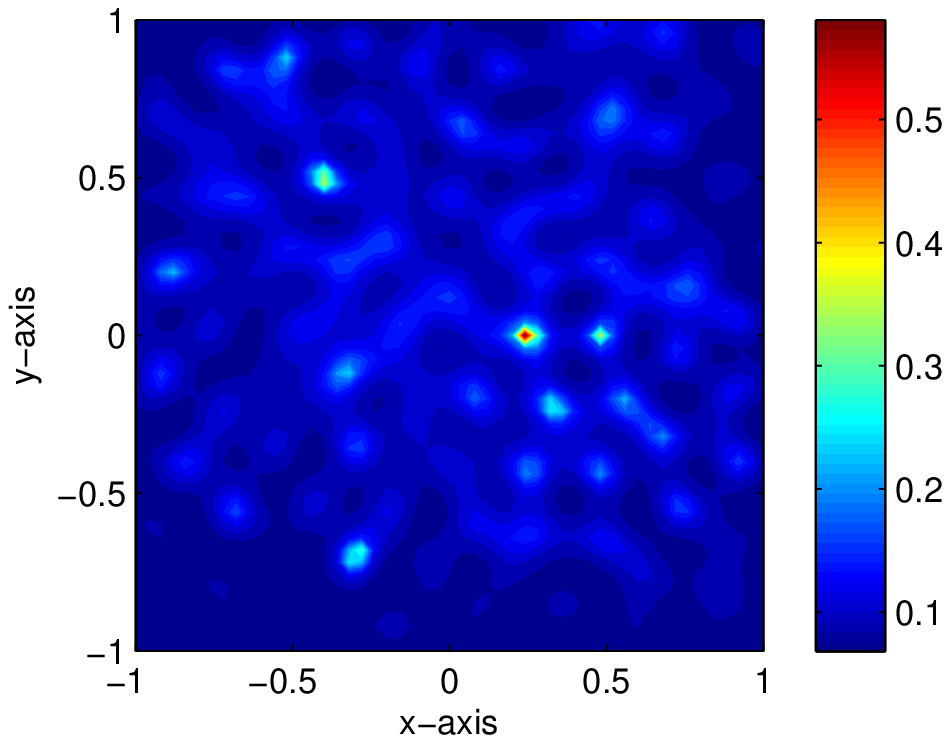}
\caption{Distribution of normalized singular values (left column) and maps of $\mathcal{F}(\mx)$ with first $3-$singular values (top, right) and with $0.1-$threshold scheme (bottom, right).}\label{Result-large}
\end{figure}

On the basis of recent works \cite{AGKPS,HSZ1}, it has been confirmed that MUSIC is robust with respect to the random noise. In order to examine the robustness, assume that $10$ dB Gaussian random noise is added to the unperturbed data $u_{\mathrm{far}}(\vv_j,\vt_l)$. Throughout results in Figure \ref{Result-noise} when $N=32$ and $\lambda=0.3$, although some blurring appears in the map of $\mathcal{F}(\mx)$, we can easily find proper singular values and obtain an accurate image. It is interesting to observe that opposite to the results in Figure \ref{Result}, locations of $\Sigma_m$ can be detected even though existence of some artifacts.

\begin{figure}[h]
\centering
\includegraphics[width=0.49\textwidth]{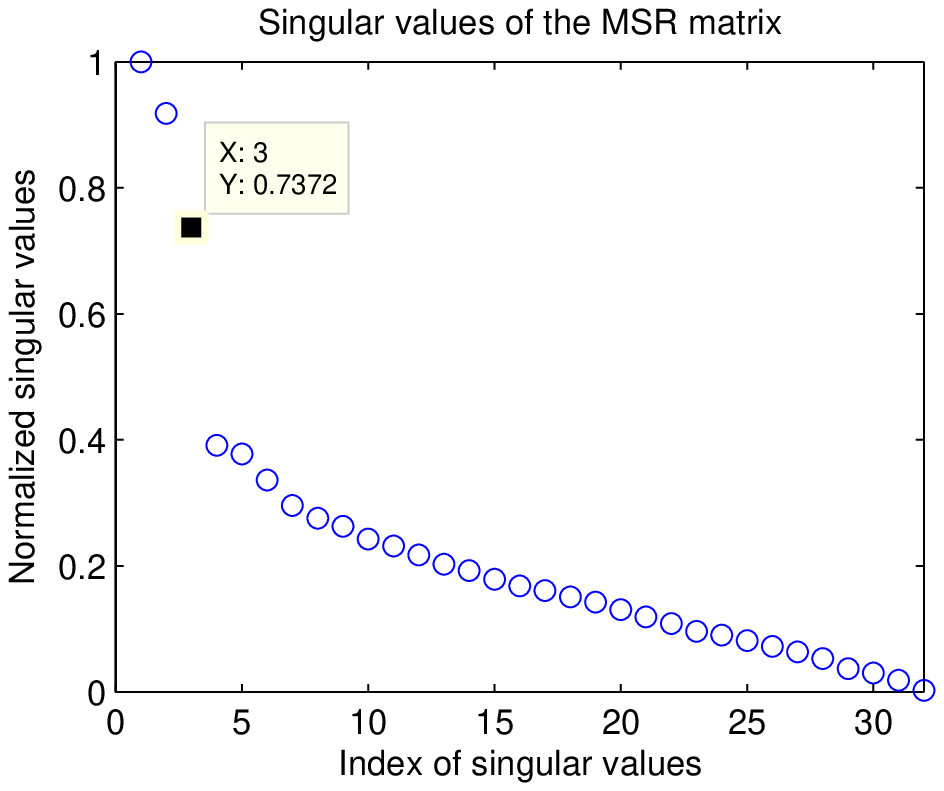}
\includegraphics[width=0.49\textwidth]{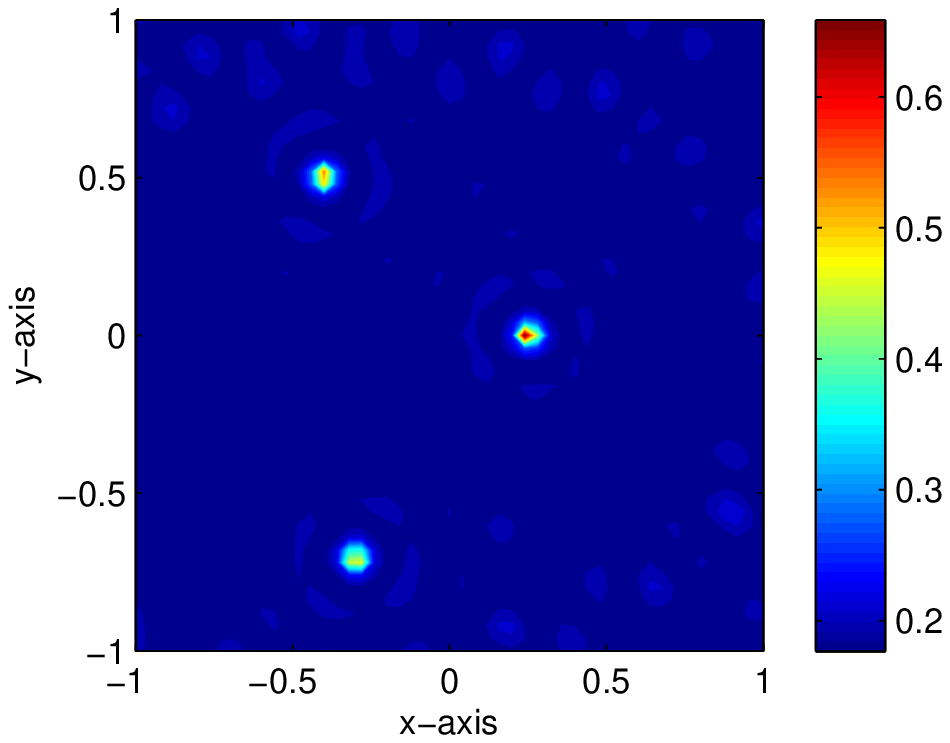}\\
\includegraphics[width=0.49\textwidth]{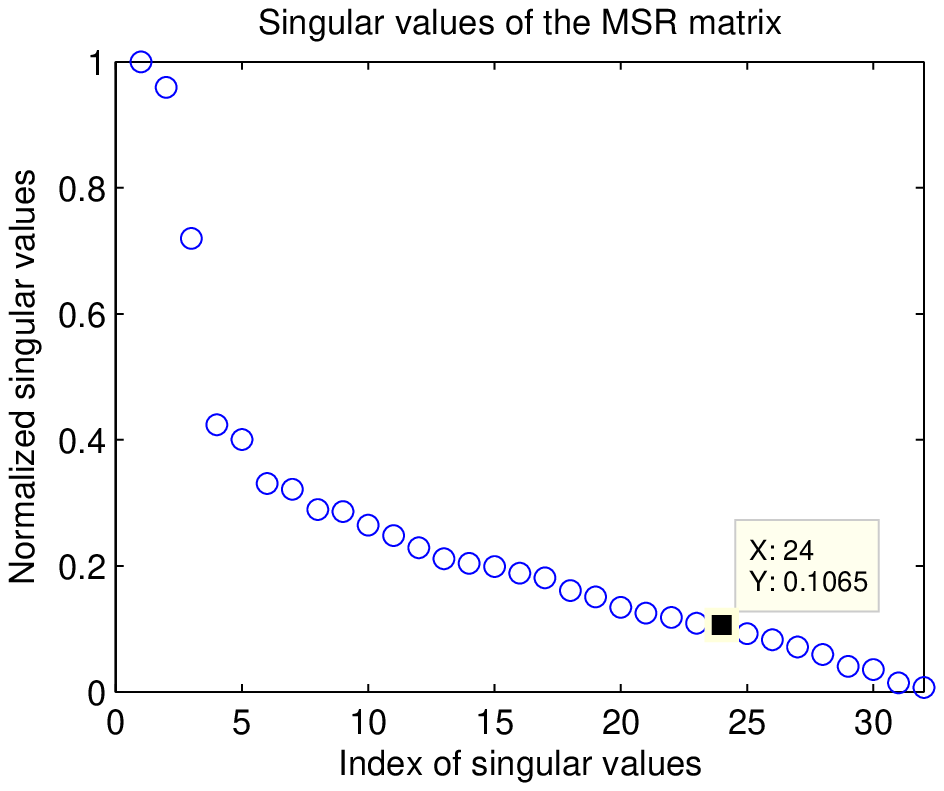}
\includegraphics[width=0.49\textwidth]{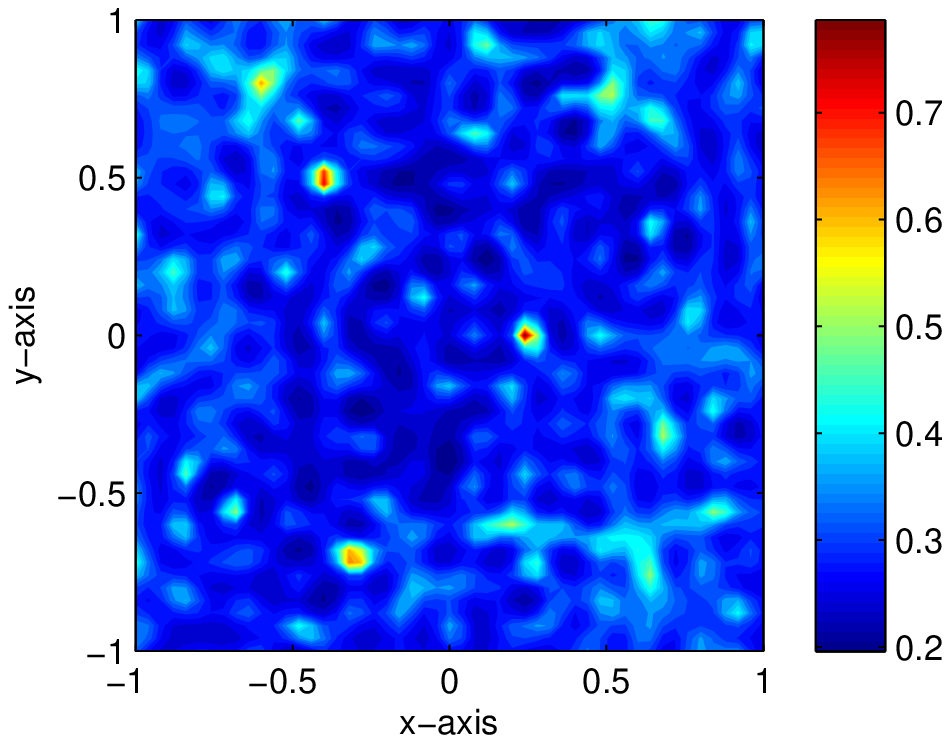}
\caption{Distribution of normalized singular values (left column) and maps of $\mathcal{F}(\mx)$ with first $3-$singular values (top, right) and with $0.1-$threshold scheme (bottom, right) when $N=32$, $\lambda=0.3$, and collected far-field data is perturbed by a white Gaussian random noise.}\label{Result-noise}
\end{figure}

From the above results, we can examine that by having small perturbations of random scatterers $\Delta_s$, their effects to the scattered fields are quite small so that $\Sigma_m$ can be discriminated very accurately. Opposite to the this examination, let us consider the effect of $\Delta_s$ when their size and permittivities satisfy $r_s=0.1$ and $\eps_s=\eta(2.5,3)$, respectively (remember that $r_m=0.1$ and $\eps_m\equiv3$ for all $m$). In this example, it is very hard to discriminate nonzero singular values associated with $\Sigma_m$ so that it is impossible to detect their exact locations, refer to Figure \ref{Result-distinguish} when $N=32$ and $\lambda=0.4$.

\begin{figure}[h]
\centering
\includegraphics[width=0.49\textwidth]{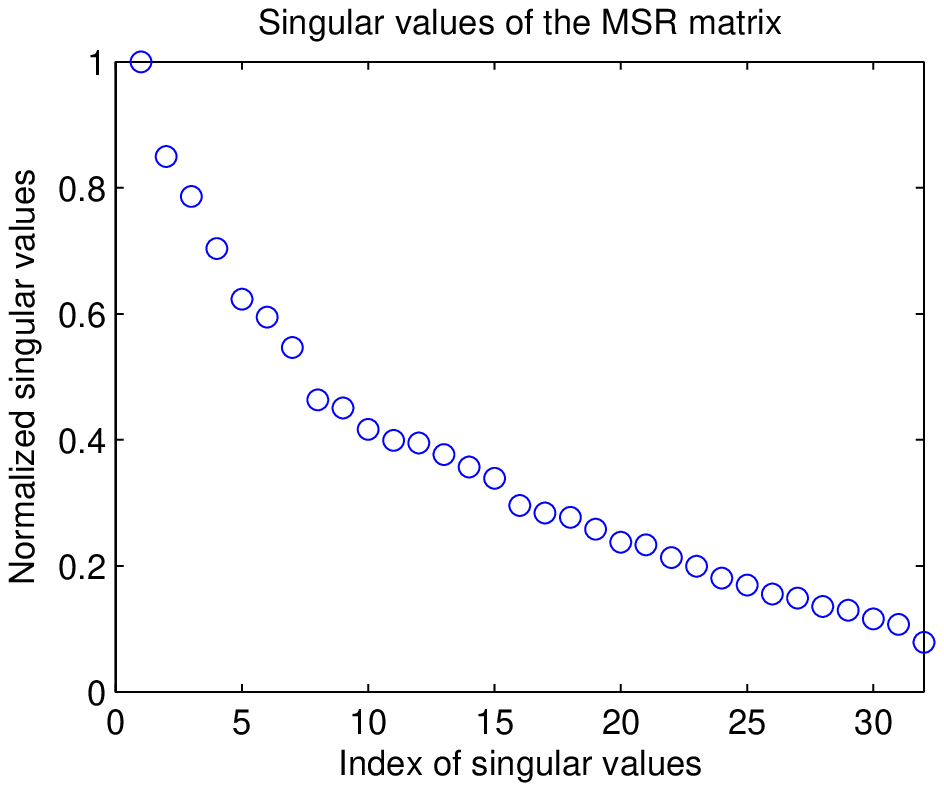}
\includegraphics[width=0.49\textwidth]{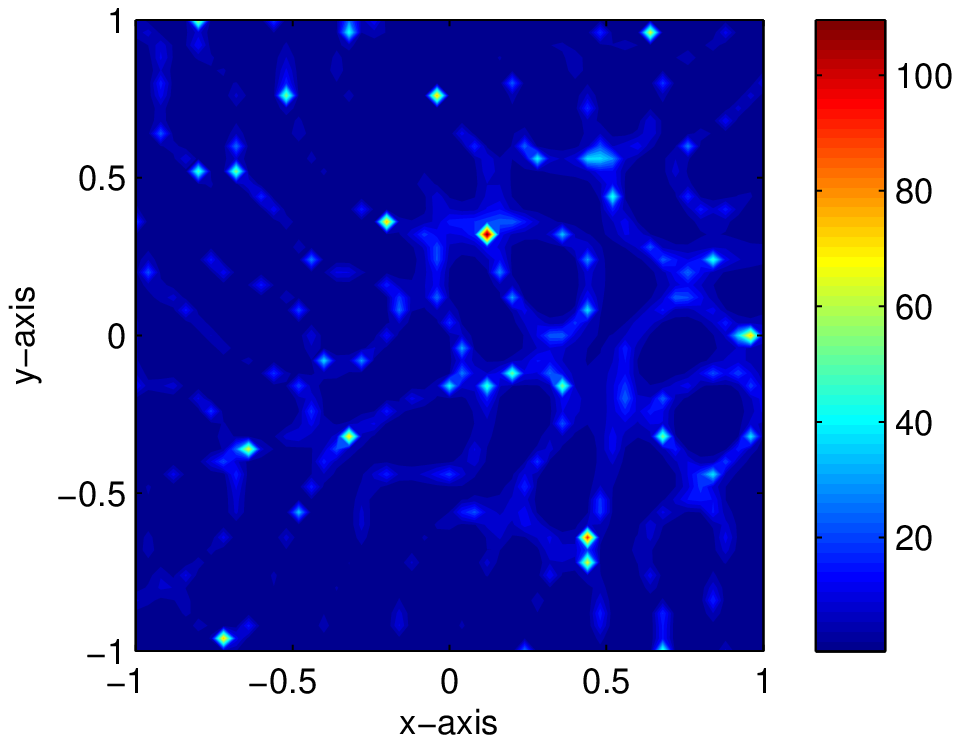}
\caption{Distribution of normalized singular values (left) and map of $\mathcal{F}(\mx)$ with $0.1-$threshold scheme (right).}\label{Result-distinguish}
\end{figure}

It is well-known that using multi-frequency improves the imaging performance, refer to \cite{AGKPS,P-SUB3,P-SUB1,JP2}. At this moment, we consider multi-frequency MUSIC-type imaging in order to compare the imaging performance against the traditional single-frequency one. For given $F-$ different frequencies $0<\omega_1<\omega_2<\cdots<\omega_F$, SVD of MSR matrix $\mathbb{K}(\omega_f)$ is
\[\mathbb{K}(\omega_f)\approx\sum_{m=1}^{3M}\sigma_m(\omega_f)\mathbf{U}_m(\omega_f)\mathbf{V}_m^*(\omega_f)+\sum_{s=3M+1}^{3M+3S}\sigma_s(\omega_f)\mathbf{U}_s(\omega_f)\mathbf{V}_s^*(\omega_f).\]
Then, by choosing test vector
\[\mf(\mx;\omega_f)=\frac{1}{\sqrt{N}}\bigg[e^{i\omega_f\vt_1\cdot\mx},e^{i\omega_f\vt_2\cdot\mx},\cdots,e^{i\omega_f\vt_N\cdot\mx}\bigg]^T,\]
we can survey the projection operator onto the null (or noise) subspace such that
\[\mathbf{P}_{\mathrm{noise}}(\mf(\mx;\omega_f)):=\left(\mathbb{I}_N-\sum_{m=1}^{3M+3S}\mathbf{U}_m(\omega_f)\mathbf{U}_m^*(\omega_f)\right)\mf(\mx;\omega_f),\]
and correspondingly multi-frequency MUSIC-type imaging function $\mathcal{Q}(\mx;F)$ can be introduced as
\[\mathcal{Q}(\mx;F)=\left|\frac{1}{F}\sum_{f=1}^{F}\mathbf{P}_{\mathrm{noise}}(\mf(\mx;\omega_f))\right|^{-1}.
\]

Figure \ref{Result-frequency} shows maps of $\mathcal{Q}(\mx;10)$, where $\omega_f=2\pi/\lambda_s$. Here, $N=32$ directions are applied and $\lambda_f$ are equi-distributed in the interval $[\lambda_F,\lambda_1]$ with $\lambda_1=0.7$ and $\lambda_F=0.3$. By comparing results in Figure \ref{Result}, we can observe that unexpected artifacts have been eliminated so that applying multiple frequencies yields a more accurate result then single frequency.

\begin{figure}[h]
\centering
\includegraphics[width=0.49\textwidth]{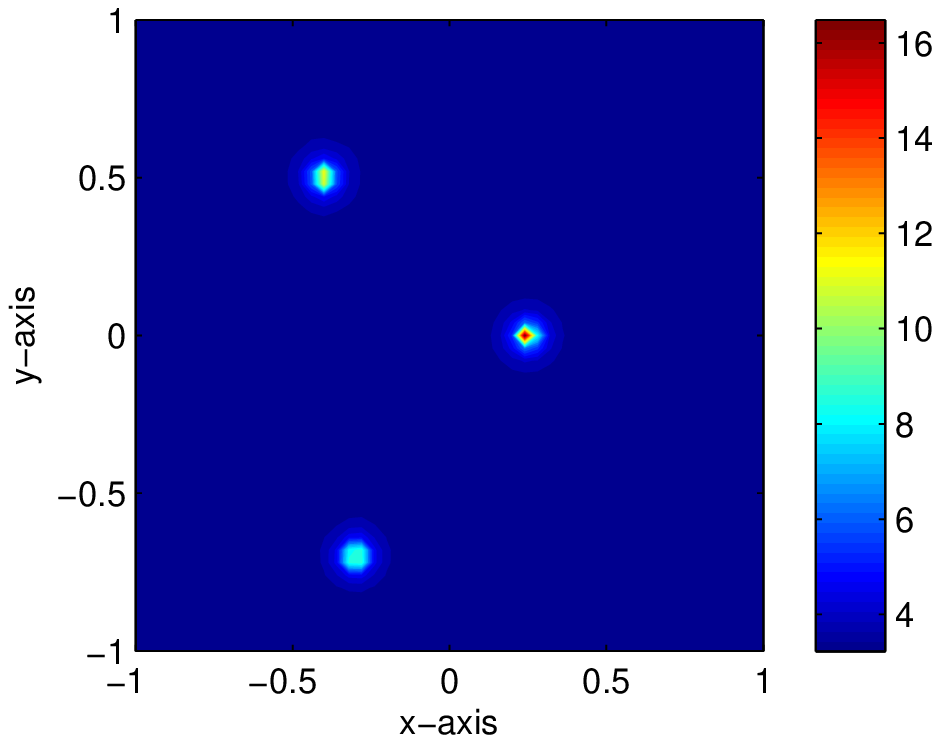}
\includegraphics[width=0.49\textwidth]{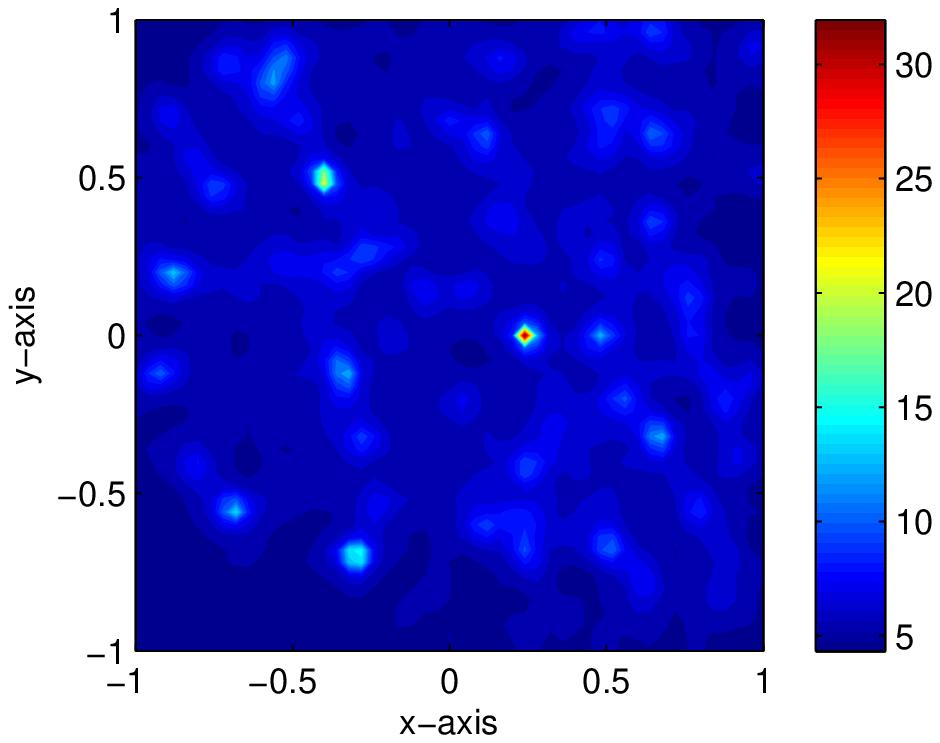}
\caption{Maps of $\mathcal{Q}(\mx;10)$ with first $3-$singular values (left) and with $0.1-$threshold scheme (right).}\label{Result-frequency}
\end{figure}

\section{Concluding remarks}\label{sec:6}
The mathematical structure of a MUSIC-type imaging function is carefully identified by establishing a relationship with integer ordered Bessel functions. This is based on the fact that the elements of the MSR matrix can be expressed by an asymptotic expansion formula. The identified structure explains some unexplained phenomena and provides a method for improvements.

Based on recent work \cite{AILP}, the electric field $\mE$ in the existence of small inhomogeneity with radius $r$ can be expressed as follows:
\begin{multline*}
\mE(\mx)=\mE_0(\mx)+r^3|\mB_m|\sum_{m=1}^{M}\bigg(k^2\frac{3(\eps_m-\eps_0)}{\eps_m+2\eps_0}\mG(\mx,\mz_m)\cdot\mE_0(\mz_m)\\
-i\omega\mu_0\frac{3(\mu_m-\mu_0)}{\mu_m+2\mu_0}\nabla\times\mG(\mx,\mz_m)\cdot\mH_0(\mz_m)+\bigg)+O(r^4),
\end{multline*}
where electromagnetic fields $(\mE_0,\mH_0)$ are the solutions of the Maxwell equations
\[\left\{\begin{array}{l}
\medskip\nabla\times\mE_0=i\omega\mu_0\mH_0\quad\mbox{in}\quad\mathbb{R}^3\\
\medskip\nabla\times\mH_0=-i\omega\eps_0\mE_0+\mathbf{J}_0\quad\mbox{in}\quad\mathbb{R}^3\\
\medskip\displaystyle\lim_{|\mr|\to\infty}\mr\bigg(\nabla\times\mE_0-ik\frac{\mr}{|\mr|}\times\mE_0\bigg)=0\\
\medskip\displaystyle\lim_{|\mr|\to\infty}\mr\bigg(\nabla\times\mH_0-ik\frac{\mr}{|\mr|}\times\mH_0\bigg)=0
\end{array}\right.\]
and $\mG$ is Green's function
\[\mG(\mx,\mz_m):=\left(\left[\begin{array}{ccc}1&0&0\\0&1&0\\0&0&1\end{array}\right]+\frac{\nabla\nabla}{k^2}\right)\frac{e^{ik|\mx-\mz_m|}}{4\pi|\mx-\mz_m|}.\]
Thus, by applying above asymptotic expansion formula and through the similar process in Theorem \ref{TheoremMUSIC}, the result in this paper can be extended to the three dimension problem so that MUSIC will be applicable for detecting three-dimensional inhomogeneities surrounded by random scatterers.

In comparison with the MUSIC, other closely related reconstruction algorithms such as linear sampling method \cite{CHM,HM,KR}, subspace migration \cite{P-SUB3,JP1,P-SUB1}, and direct sampling method \cite{LLZ,IJZ1,IJZ2} will be applicable for detecting inhomogeneities in random medium. Analysis of imaging functions and exploring their certain properties will be the forthcoming work.

\bibliographystyle{unsrt}
\bibliography{References}
\end{document}